\theoremstyle{plain}
\newtheorem{theorem}{Theorem}[section]
\newtheorem{lemma}[theorem]{Lemma}
\newtheorem{proposition}[theorem]{Proposition}
\newtheorem{conjecture}[theorem]{Conjecture}
\theoremstyle{definition}
\newtheorem{definition}[theorem]{Definition}
\newtheorem{remark}[theorem]{Remark}
\newtheorem*{acknowledgements}{Acknowledgements}
\numberwithin{equation}{section}
\begin{document}

\title[MF approximations of amalgamated products of groups]{Finite dimensional approximations of certain amalgamated free products of groups}
\author{Christopher Schafhauser}
\address{Department of Mathematics, University of Nebraska-Lincoln, Lincoln, NE, USA}
\email{cschafhauser2@unl.edu}
\date{\today}
\subjclass[2020]{46L05}

\begin{abstract}
	A group is called matricial field (MF) if it admits finite dimensional approximate unitary representations which are approximately faithful and approximately contained in the left regular representation.  This paper provides a new class of MF groups by showing that given two amenable groups with a common normal subgroup, the amalgamated free product is MF.
\end{abstract}

\maketitle

\section{Introduction}

Some of the most prominent classes of operator algebras arise from groups and group representations.  As far back as the introductory paper on operator algebras~(\cite{Murray-vonNeumann36}), Murray and von Neumann discuss the possibility of developing a representation theory for infinite groups as one of their motivations for investigating rings of operators.  The left regular representation of a (discrete) group $G$, given by
\begin{equation}
	\lambda^G \colon G \rightarrow U(\ell^2(G)) \colon g \mapsto (\delta_h \mapsto \delta_{gh}), \qquad g, h \in G,
\end{equation}
is of particular interest.  In fact, the reduced group $C^*$-algebra $C^*_\lambda(G)$ and the group von Neumann algebra $L(G)$, which are, respectively, the norm and weak closed span of the image of $\lambda^G$, are some of the most studied operator algebras.

In both group theory and operator algebras, approximation properties are ubiquitous.  For example, amenability, which can be viewed as an internal approximation of the dynamics by finite (or finite dimensional) objects, has a fundamental role in both subjects.  A much weaker approximation property is given by the notion of hyperlinearity for groups or the corresponding condition for von Neumann algebras: embeddability into $R^\omega$, the tracial ultrapower of the hyperfinite II$_1$-factor (see \cite{Ozawa04} for a survey).  In fact, a countable group $G$ is hyperlinear if and only if $L(G)$ embeds into $R^\omega$ in a trace-preserving way; see \cite[Theorem~8.5]{Pestov08}, for example.  The Connes embedding problem, asking if every (separably acting) tracial von Neumann algebra embeds into $R^\omega$, has been a central problem in operator algebras since shortly after its inception in \cite{Connes76}.  A negative solution was recently announced in \cite{JNVWY}; it is still an open question if all groups are hyperlinear.

The present paper is concerned with an approximation property for groups motivated by work in $C^*$-algebras---particularly, the MF $C^*$-algebras from \cite{Blackadar-Kirchberg97}.  The MF property is variation of hyperlinearity obtained by equipping the matrix unitary groups with a more rigid metric. Roughly, a group is MF if the group admits finite dimensional approximate unitary representations which are approximately faithful and approximately contained in the left regular representation.

For all $d \in \mathbb N$, let $U(d)$ denote the group of $d \times d$ unitary matrices, let $\|\cdot\|$ denote the operator norm on $M_d$, the algebra of $d \times d$ matrices over $\mathbb C$, and let $\mathrm{tr}_d$ denote the trace on $M_d$ with the normalization $\mathrm{tr}_d(1) = 1$.  
\begin{definition}[cf.\ \cite{Blackadar-Kirchberg97}]\label{def:MF-group}
	A group $G$ is \emph{matricial field (MF)} if for all finite sets $\mathcal G \subseteq G$ and $\epsilon > 0$, there are $d \in \mathbb N$ and a function $u \colon G \rightarrow U(d)$ with
	\begin{enumerate}
		\item\label{def:MF-group1} $\|u_{gh} - u_gu_h\| < \epsilon$ for all $g, h \in \mathcal G$,
		\item\label{def:MF-group2} $|\mathrm{tr}_G(u_g)| < \epsilon$ for all $g \in \mathcal G \setminus\{1\}$, and
 		\item\label{def:MF-group3} $\big|\big\| \sum\limits_{g \in \mathcal G} c_g u_g \big\| - \big\| \sum\limits_{g \in \mathcal G} c_g \lambda^G_g\big\| \big| < \epsilon$ for all $(c_g)_{g \in \mathcal G} \subseteq \mathbb C$ with $\max\limits_{g \in \mathcal G} |c_g| \leq 1$.
	\end{enumerate}
\end{definition}

Note that the definition here is (a priori) quite a bit stronger than the definition of MF group in \cite{Carrion-Dadarlat-Eckhardt13}.  The above definition is more closely related to approximation properties of the $C^*$-algebra.  In fact, with the present terminology, a group $G$ is MF if and only if the canonical trace on $C^*_\lambda(G)$ is MF (see Proposition~\ref{prop:MF}).  

Every MF group is hyperlinear.  As with hyperlinearity, there are no known examples of groups which are not MF. The only known examples of non-MF traces on $C^*$-algebras are those not factoring through $R^\omega$, arising from \cite{JNVWY}.

The class of known examples of MF groups is rather small.  The easiest class of non-trivial examples is that of residually finite amenable groups.  Using residual finiteness, one can construct finite dimensional unitary representations of $G$ satisfying \ref{def:MF-group2} (and obviously \ref{def:MF-group1}) from the left regular representations of finite quotient groups, and amenability forces~\ref{def:MF-group3}.  As far as permanence properties, it is easy to see that MF is a local property which passes to subgroups and finite index supergroups.  With slightly more work, one can show that products of MF groups are MF provided that one factor is exact (see \cite[Proposition~3.11(iii)]{Hayes15}).

Much more surprising examples come from the celebrated results of Haagerup and Thorbj{\o}rnsen (\cite{Haagerup-Thorbjornsen05}) stating that free groups are MF and Tikuisis, White, and Winter (\cite{Tikuisis-White-Winter17}) stating that amenable groups are MF.  The Haagerup--Thorbj{\o}rnsen theorem was generalized by Hayes in \cite{Hayes15} (using \cite{Collins-Male14})
to show that free products of MF groups are MF.  Utilizing deep results from the classification theory of amenable $C^*$-algebras, Rainone and I showed in \cite{Rainone-Schafhauser19} that if $G$ is an amenable group and $F$ is a free group acting on $G$, then $G \rtimes F$ is MF (\cite[Theorem~1.3(i)]{Rainone-Schafhauser19}).  Very recent results of Louder and Magee (\cite{Louder-Magee22}) and Magee and Thomas (\cite{Magee-Thomas23}) show, respectively, that all limit groups and all right-angled Artin groups are MF.

At a 2018 BIRS workshop, Hayes posed the following problem, motivated by the results in the previous paragraph and the analogous theorems for hyperlinear and sofic groups---see \cite[Corollary~4.5]{Brown-Dykema-Jung08} for hyperlinearity (stated in terms of von Neumann algebras) and \cite{Elek-Szabo11, Paunescu11} for soficity.

\begin{conjecture}[Hayes]\label{conj}
	If $G$ and $H$ are MF groups and $A$ is a common amenable subgroup, then $G *_A H$ is MF.
\end{conjecture}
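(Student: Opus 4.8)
The plan is to pass to the $C^*$-algebraic side via Proposition~\ref{prop:MF} and prove instead that the canonical trace on $C^*_\lambda(G *_A H)$ is MF. The starting point is the identification of the reduced group $C^*$-algebra of an amalgamated free product with the reduced amalgamated free product of $C^*$-algebras,
\[
	C^*_\lambda(G *_A H) \;\cong\; C^*_\lambda(G) *_{C^*_\lambda(A)} C^*_\lambda(H),
\]
taken with respect to the canonical trace-preserving conditional expectations onto $C^*_\lambda(A)$. The point of amenability of $A$ is that $C^*_\lambda(A)$ is nuclear, so the amalgam is formed over a nuclear subalgebra; this is what makes the norm on the amalgamated free product accessible to finite-dimensional modelling.

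First I would treat the case singled out in the abstract, where $A$ is normal in both $G$ and $H$. Then $A$ is normalised by both factors, hence by all of $G *_A H$, so $A \trianglelefteq G *_A H$ and the quotient is the free product of the quotients: there is a short exact sequence $1 \to A \to G *_A H \to (G/A) * (H/A) \to 1$. Here $A$ is amenable and, since $G/A$ and $H/A$ are amenable---hence MF by Tikuisis--White--Winter---and free products of MF groups are MF by Hayes, the quotient $Q = (G/A) * (H/A)$ is MF. Structurally this realises $C^*_\lambda(G *_A H)$ as a (possibly twisted) crossed product $C^*_\lambda(A) \rtimes Q$, and reduces the conjecture, in this case, to the assertion that an extension of an MF group by an amenable normal subgroup is MF---a non-split generalisation of the semidirect-product result of Rainone and the author for $G \rtimes F$, with the acting free group replaced by the free product $Q$ of amenable groups.

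The finite-dimensional data would then be assembled from two sources: approximate representations of $Q$ supplied by its MF property, and genuine finite-dimensional representations of $A$ coming from its amenability. Using a set-theoretic section of $Q$ in $G *_A H$ together with the conjugation action of $Q$ on $A$, I would splice these into functions $u \colon G *_A H \to U(d)$ by a twisted tensor/induction construction, the $2$-cocycle of the non-split extension being absorbed into the defining unitaries. Near-multiplicativity~\ref{def:MF-group1} would then inherit from the two pieces, while the trace condition~\ref{def:MF-group2} would follow by splitting an element according to whether it projects nontrivially to $Q$, where the approximate faithfulness of the $Q$-models applies, or lies in $A$, where the faithful representations of the amenable group $A$ apply.

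The main obstacle is condition~\ref{def:MF-group3}, the operator-norm condition: one must match the reduced $C^*$-norm on the amalgamated free product, which requires \emph{strong} (not merely tracial) convergence of the spliced matrix models. This is exactly the point at which random-matrix input in the spirit of Collins--Male must enter, now in a form compatible with amalgamation over the nuclear algebra $C^*_\lambda(A)$; controlling the norm of arbitrary reduced words simultaneously in the $Q$-models and the $A$-representations is the crux. For the full strength of Conjecture~\ref{conj}, with $G$ and $H$ only assumed MF, the difficulty compounds: one no longer has concrete matrix models to feed into a strong-convergence argument, only the abstract approximations of Definition~\ref{def:MF-group}, so passing from the amenable, normal case to general MF factors over a general amenable $A$ appears to be the decisive remaining barrier.
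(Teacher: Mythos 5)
First, a point of framing: the statement you were asked to prove is stated in the paper as an \emph{open conjecture}; the paper proves only the special case Theorem~\ref{thm:main} ($G$ and $H$ amenable, $N$ normal in both), and your proposal, by its own admission, does not close the general case either, so the comparison must be with the paper's proof of that special case. There your plan has a genuine gap at exactly the point you flag. You reduce the normal case to the assertion that an extension of an MF group by an amenable normal subgroup is MF, and propose to verify condition~\ref{def:MF-group3} of Definition~\ref{def:MF-group} by splicing approximate models of $Q = (G/N)*(H/N)$ with ``genuine finite dimensional representations of $A$ coming from its amenability'' through a section-and-cocycle construction plus a Collins--Male-type strong convergence argument over $C^*_\lambda(A)$. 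Three things fail here. The reduction is to a statement that is itself open (it is not known that extensions of MF groups by amenable normal subgroups are MF; the paper's Lemma~\ref{lemma} shows this holds \emph{if and only if} a certain representation exists, which is the actual content to be supplied). Amenable groups need not have faithful---or even any nontrivial---finite dimensional representations (consider an infinite simple amenable group), so the $A$-side input you posit does not exist. And the amalgamated strong-convergence input you invoke is not available in the literature, which is why you are left, as you concede, with no mechanism for the operator-norm condition.

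The paper's route supplies precisely the two ideas missing from your sketch. Lemma~\ref{lemma} shows that when $N \trianglelefteq G$ is amenable and $G/N$ is exact and MF, then $G$ is MF as soon as there is a \emph{genuine} group homomorphism $u \colon G \rightarrow U(A)$ into a unital $C^*$-algebra with an MF trace satisfying only $\mathrm{tr}_A(u_n) = 0$ for $n \in N \setminus \{1\}$: no trace or norm condition is imposed off $N$, because the tensor trick $g \mapsto u_g \otimes \lambda^{G/N}_{q(g)}$ kills traces outside $N$, a conditional-expectation diagram chase shows the resulting map factors through $C^*_\lambda(G)$ trace-preservingly, and condition~\ref{def:MF-group3} is then inherited by composing with MF approximations of $\mathrm{tr}_A \otimes \mathrm{tr}_{G/N}$ (exactness of the quotient enters here, and the only random-matrix input is Theorem~\ref{thm:free} applied to $Q$). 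The required homomorphism on $G *_N H$ is then produced not by splicing but by \emph{uniqueness}: Theorem~\ref{thm:af-emb} (from \cite{Schafhauser20}) says a countable amenable group admits a homomorphism into $U(\mathcal Q)$ with vanishing traces, unique up to approximate unitary equivalence. Applying uniqueness to the restrictions to $N$ of the $G$- and $H$-models, one conjugates so that they agree on $N$ up to $\epsilon$ (Lemma~\ref{lem:nets}), passes to the ultrapower $\mathcal Q_\omega$ where the agreement becomes exact, and invokes the universal property of the amalgam to obtain $w \colon G *_N H \rightarrow U(\mathcal Q_\omega)$ with $\mathrm{tr}_{\mathcal Q_\omega}(w_n) = 0$ for $n \in N \setminus \{1\}$. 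This classification step is where amenability of the \emph{full factors} $G$ and $H$---not merely nuclearity of $C^*_\lambda(A)$, which is all your setup extracts from the hypotheses---is essential, and it is exactly why the paper obtains only this special case of Conjecture~\ref{conj}.
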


The present paper proves the following special case of Conjecture~\ref{conj}.

\begin{theorem}\label{thm:main}
	If $G$ and $H$ are amenable groups with a common normal (necessarily amenable) subgroup $N$, then $G *_N H$ is MF.
\end{theorem}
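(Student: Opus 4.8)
The plan is to verify that the canonical trace on $C^*_\lambda(G *_N H)$ is MF and to invoke Proposition~\ref{prop:MF}. The starting point is the standard identification of the reduced group $C^*$-algebra as a reduced amalgamated free product
\begin{equation*}
	C^*_\lambda(G *_N H) \;\cong\; C^*_\lambda(G) *_{C^*_\lambda(N)} C^*_\lambda(H),
\end{equation*}
formed with respect to the canonical $\tau$-preserving conditional expectations $E_G \colon C^*_\lambda(G) \to C^*_\lambda(N)$ and $E_H \colon C^*_\lambda(H) \to C^*_\lambda(N)$ arising from the inclusions $N \leq G$ and $N \leq H$. Writing $A = C^*_\lambda(N)$, the point of the normality hypothesis is that $\lambda^G_g A (\lambda^G_g)^* = A$ for all $g \in G$ and likewise over $H$, so that conjugation gives a trace-preserving action of $Q = (G/N) * (H/N)$ on $A$; the decomposition above may then be read as $(A \rtimes_r G/N) *_A (A \rtimes_r H/N)$, simultaneously in the spirit of the free-product result of \cite{Hayes15} and of the crossed-product result of \cite{Rainone-Schafhauser19}. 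In the non-split case a set-theoretic section and its $2$-cocycle enter, but these can be absorbed into the construction of the approximate representations below at no real cost.

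The two factors are the inputs for the quasidiagonality machinery. Since $G$ and $H$ are amenable, $C^*_\lambda(G)$ and $C^*_\lambda(H)$ are separable, nuclear, and satisfy the UCT, and their canonical traces are faithful; hence by \cite{Tikuisis-White-Winter17} these traces are quasidiagonal, and in particular MF. This yields, for any finite subset and any tolerance, an approximate, approximately trace-preserving, and (using nuclearity together with quasidiagonality) approximately isometric matrix model of each factor. The additional feature made available by normality is that both models may be arranged to \emph{share a common matrix model of the amalgam} $A$: I would first fix a quasidiagonal matrix model $\pi_N \colon C^*_\lambda(N) \to M_{d_0}$ of the canonical trace of $N$, and then build the models of $C^*_\lambda(G)$ and $C^*_\lambda(H)$ so that each restricts, up to $\epsilon$, to an amplification of $\pi_N$, with the group unitaries implementing the conjugation action of the two quotients on this fixed copy of $N$.

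The final step glues these into a single model of the amalgamated free product. Following the route by which \cite{Hayes15} deduced the free-product case from the strong convergence of Haar-random unitaries \cite{Collins-Male14, Haagerup-Thorbjornsen05}, I would conjugate the $H$-model by a Haar-random unitary drawn from the relative commutant of the shared $A$-model, thereby placing the two models in approximate free position \emph{with amalgamation over} $A$ while fixing the common copy of $A$. Conditions \ref{def:MF-group1} and \ref{def:MF-group2} are then inherited from the two factor models together with the vanishing of the reduced free-product trace on alternating reduced words, which forces $\tau(\lambda_g) = 0$ for every $g \neq 1$; condition \ref{def:MF-group3} follows from the operator-norm convergence supplied by the random-matrix estimate.

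The main obstacle is exactly this last step: one requires a \emph{strong} (operator-norm) asymptotic freeness statement \emph{with amalgamation} over the noncommutative subalgebra $A = C^*_\lambda(N)$, whereas \cite{Collins-Male14} is an amalgamation-free result. This is precisely where normality of $N$ does the essential work. Because $A$ is globally invariant under the generating unitaries, the amalgamated problem should collapse to controlling the free product $Q = (G/N)*(H/N)$ of the \emph{quotients} acting over a single fixed model $\pi_N$ of $N$---an equivariant, relative strong-convergence statement that one can hope to reduce to the unamalgamated Collins--Male theorem by amplifying $\pi_N$ and randomizing only in its relative commutant. Making this reduction precise, and checking that the $2$-cocycle of the non-split case remains harmless throughout, is the technical heart of the argument.
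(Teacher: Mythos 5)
Your proposal is incomplete at exactly the step you yourself flag: everything funnels into a \emph{strong} (operator-norm) asymptotic freeness theorem \emph{with amalgamation} over the noncommutative algebra $A = C^*_\lambda(N)$, and no such relative version of \cite{Collins-Male14} is known. The proposed reduction---randomize by a Haar unitary in the relative commutant of a shared matrix model of $A$---does not obviously go through: conditioning the Haar measure to a relative commutant destroys the setting of the Collins--Male theorem, and normality of $N$, while it makes $A$ globally invariant under the generating unitaries, does not by itself collapse the problem to an unamalgamated one, since one must control norms of \emph{all} words alternating over the two factors relative to $A$, not just the images of group elements. A telling sanity check: if this step worked it would work verbatim for any amenable amalgam $A$ inside MF groups, i.e.\ it would prove Conjecture~\ref{conj} in full, which is open; the theorem under discussion is deliberately a special case. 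There is a second, smaller gap earlier in your sketch: arranging the $G$-model and the $H$-model to ``share a common matrix model of the amalgam'' is not something quasidiagonality (an existence statement) provides. One needs a \emph{uniqueness} theorem, and this is precisely what the paper imports from \cite{Schafhauser20} as Theorem~\ref{thm:af-emb}: any two trace-zero homomorphisms of a countable amenable group into $U(\mathcal Q)$ are approximately unitarily equivalent, so the restrictions to $N$ of the two models can be intertwined by a single unitary (Lemma~\ref{lem:nets}). Even this gives only agreement on $N$, not your stronger equivariant matching, which is neither available nor needed.

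The paper's actual route avoids relative strong convergence entirely, and it is worth seeing how. Lemma~\ref{lemma} shows that, because $N$ is normal in $G *_N H$ with quotient $(G/N)*(H/N)$ exact and MF, it suffices to produce a group homomorphism $w \colon G *_N H \rightarrow U(A)$ into a unital $C^*$-algebra with an MF trace satisfying $\mathrm{tr}_A(w_n) = 0$ \emph{only for} $n \in N \setminus \{1\}$. The trick is to tensor: $g \mapsto w_g \otimes \lambda^{(G/N)*(H/N)}_{q(g)}$ kills the trace of every $g \notin N$ via the quotient's canonical trace, which repairs condition~\ref{def:MF-group2} of Definition~\ref{def:MF-group}, while MF-ness of the quotient (Theorems~\ref{thm:qd} and~\ref{thm:free}---this is the only place the random-matrix input \cite{Collins-Male14, Hayes15} enters, applied to an \emph{unamalgamated} free product) together with exactness of the quotient \cite{Dykema99} delivers condition~\ref{def:MF-group3}. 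The homomorphism $w$ itself comes from the matching-on-$N$ step via Theorem~\ref{thm:af-emb} followed by an ultrapower. So the roles are inverted relative to your sketch: normality is used to pass the free-probability burden to the quotient $(G/N)*(H/N)$, where no amalgamation is present, rather than to tame an amalgamated strong-convergence problem that remains out of reach.
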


The main tool is a classification result which follows from \cite{Schafhauser20}.  Roughly, if $G$ is an amenable group, then up to unitary equivalence, matrix amplification, and norm perturbation, there is a unique function $u \colon G\rightarrow U(d)$ satisfying the conditions of Definition~\ref{def:MF-group} (see Theorem~\ref{thm:af-emb}). This will be used to construct finite dimensional approximate unitary representations of $G *_N H$, which is then combined with MF approximations for the quotient to produce MF approximations in Theorem~\ref{thm:main}.

\begin{acknowledgements}
The work in this paper was partially supported by  NSF Grant DMS-2000129.  I am grateful to Ben Hayes for helpful conversations related to the results of this paper.  I also thank the referee for helpful feedback on this paper and Michael Magee for drawing my attention to \cite{Louder-Magee22} and \cite{Magee-Thomas23}.
\end{acknowledgements}

\section{Notation and Preliminaries}

A trace on a $C^*$-algebra will always mean a tracial state.  The $C^*$-algebra of $d \times d$ matrices over $\mathbb C$ is denoted $M_d$, and  the unique trace on this algebra is written $\mathrm{tr}_d$.  For a $C^*$-algebra $A$, write $U(A)$ for the unitary group of $A$.  If $G$ is a discrete group, denote the full and reduced group $C^*$-algebras of $G$ by $C^*(G)$ and $C^*_\lambda(G)$, respectively.  The canonical unitary representations of $G$ on $C^*(G)$ and $C^*_\lambda(G)$ will be denoted by $u^G$ and $\lambda^G$, respectively, and the quotient map from the full to the reduced group $C^*$-algebra will be denoted by $\pi_G \colon C^*(G) \rightarrow C^*_\lambda(G)$, so $\pi_G(u^G_g) = \lambda^G_g$ for all $g \in G$. Let $\mathrm{tr}_G$ denote trace on $C^*_\lambda(G)$ determined by $\mathrm{tr}_G(\lambda^G_g) = 0$ for all $g \in G \setminus\{1\}$.

The notion of an MF trace was introduced in \cite{Rainone-Schafhauser19} following the approximation property characterization of MF $C^*$-algebras from \cite{Blackadar-Kirchberg97} and the approximation properties of traces (especially quasidiagonality) introduced in \cite{Brown06}.

\begin{definition}\label{def:MF-trace}
A trace $\mathrm{tr}_A$ on a $C^*$-algebra $A$ is called \emph{matrial field (MF)} if for all finite sets $\mathcal F \subseteq A$ and $\epsilon > 0$, there are $d \in \mathbb N$ and a self-adjoint linear map $\phi \colon A \rightarrow M_d$ such that for all $a, b \in \mathcal F$,
\begin{equation} \|\phi(ab) - \phi(a)\phi(b)\| < \epsilon \qquad \text{and} \qquad  
|\mathrm{tr}_d(\phi(a)) - \mathrm{tr}_A(a)| < \epsilon. \end{equation}
\end{definition}

It will often be convenient (both notationally and technically) to replace the matrix algebras $M_d$ in the above definition with the UHF algebra $\mathcal Q = \bigotimes_{d = 1}^\infty M_d$.  Note that $\mathcal Q$ has a unique trace $\mathrm{tr}_\mathcal Q$, which is induced by the traces $\mathrm{tr}_d$.  The technical advantages of working with $\mathcal Q$ come from the extra divisibility which isn't present in finite dimensional matrix algebras $M_d$.  This is particularly important in the classification result from \cite{Schafhauser20} (Theorem~\ref{thm:af-emb} below).

In Definition~\ref{def:MF-trace}, replacing $M_d$ and $\mathrm{tr}_d$ with $\mathcal Q$ and $\mathrm{tr}_\mathcal Q$ does not change the definition.  Indeed, in one direction, one may compose a map into $M_d$ with a unital embedding $M_d \hookrightarrow \mathcal Q$, and in the other direction, one may compose a map into $\mathcal Q$ with a conditional expectation onto a sufficiently large matrix subalgebra of $\mathcal Q$.  In a similar fashion, the groups $U(d)$ in Definition~\ref{def:MF-group} can be replaced with the group $U(\mathcal Q)$ using that unitaries in $\mathcal Q$ can be approximated by unitaries in matrix subalgebras of $\mathcal Q$.

The following result justifies the claim made in the introduction on the relationship between the two notions of MF in Definitions~\ref{def:MF-group} and~\ref{def:MF-trace}.  The proof comes down to standard manipulation of approximate morphisms.

\begin{proposition}\label{prop:MF}
	A group $G$ is MF if and only if the trace $\mathrm{tr}_G$ is MF.
\end{proposition}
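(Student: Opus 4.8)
The plan is to prove both implications by translating between functions $u \colon G \to U(d)$ and self-adjoint linear maps $\phi \colon C^*_\lambda(G) \to M_d$, using the universal property of the full group $C^*$-algebra and the defining relation $\mathrm{tr}_G(\lambda^G_g) = 0$ for $g \neq 1$ to match up the trace conditions. The key observation is that condition~\ref{def:MF-group2} of Definition~\ref{def:MF-group}, namely $|\mathrm{tr}_G(u_g)| < \epsilon$ for $g \neq 1$, is precisely the statement that $\mathrm{tr}_d(u_g) \approx \mathrm{tr}_G(\lambda^G_g)$, since the right-hand side is $0$; this is the bridge between the two definitions.

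\emph{Suppose first that $G$ is MF.} Given a finite set $\mathcal F \subseteq C^*_\lambda(G)$ and $\epsilon > 0$, I would first reduce to the case where $\mathcal F$ is contained in the $\mathbb{C}$-linear span of a finite symmetric subset $\mathcal G \subseteq G$ (containing $1$), by approximating each element of $\mathcal F$ in norm by a finite linear combination of the $\lambda^G_g$; a standard perturbation argument shows it suffices to achieve the multiplicativity and trace estimates on the generating unitaries $\lambda^G_g$ with controlled coefficients. Applying Definition~\ref{def:MF-group} to $\mathcal G$ with a suitably small tolerance produces $d$ and $u \colon G \to U(d)$. I then define $\phi \colon C^*_\lambda(G) \to M_d$ on the dense subalgebra $\mathbb{C}[G]$ by $\phi(\lambda^G_g) = u_g$ extended linearly; condition~\ref{def:MF-group3} guarantees that $\phi$ is bounded on $\mathrm{span}\{\lambda^G_g : g \in \mathcal G\}$ with the correct approximate norm so that it extends to the relevant elements. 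Then~\ref{def:MF-group1} gives the approximate multiplicativity $\|\phi(ab) - \phi(a)\phi(b)\| < \epsilon$ and~\ref{def:MF-group2} gives $|\mathrm{tr}_d(\phi(a)) - \mathrm{tr}_G(a)| < \epsilon$ for $a,b \in \mathcal F$, after bookkeeping the coefficients.

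\emph{Conversely, suppose the trace $\mathrm{tr}_G$ is MF.} Given a finite $\mathcal G \subseteq G$ and $\epsilon > 0$, I apply Definition~\ref{def:MF-trace} with $\mathcal F = \{\lambda^G_g : g \in \mathcal G\}$ to obtain a self-adjoint linear map $\phi \colon C^*_\lambda(G) \to M_d$ (or into $\mathcal Q$, using the remark that the two versions agree). The elements $\phi(\lambda^G_g)$ are approximately multiplicative contractions and approximately unitary, so a functional-calculus or polar-decomposition perturbation argument replaces each $\phi(\lambda^G_g)$ by a genuine unitary $u_g \in U(d)$ with $\|u_g - \phi(\lambda^G_g)\|$ small; here I use that an element $x$ with $\|x^*x - 1\|$ and $\|xx^* - 1\|$ small is close to a unitary. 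Conditions~\ref{def:MF-group1} and~\ref{def:MF-group2} then follow from approximate multiplicativity and the trace estimate (using $\mathrm{tr}_G(\lambda^G_g)=0$), while~\ref{def:MF-group3} follows because $\phi$ is approximately multiplicative and approximately trace-preserving on a self-adjoint generating set, so it is approximately isometric on the relevant finite-dimensional operator system, which forces the norms of linear combinations to match.

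The main obstacle I anticipate is verifying condition~\ref{def:MF-group3}, the norm-matching condition, in the reverse direction: approximate multiplicativity and trace preservation on a finite set do not a priori control operator norms of arbitrary linear combinations uniformly. The cleanest route is to note that an approximately multiplicative, approximately trace-preserving, self-adjoint map restricted to a finite-dimensional subspace is close to a genuine $*$-homomorphism on the relevant words (after passing to $\mathcal Q$ for divisibility), and a unital $*$-homomorphism out of $C^*_\lambda(G)$ is automatically isometric on its image; the quantitative version of this — controlling how the norm of $\sum c_g \lambda^G_g$ is preserved in terms of the approximation parameters — is the technical heart, and requires choosing the tolerance in Definition~\ref{def:MF-trace} small relative to $\epsilon$ and to the cardinality of $\mathcal G$. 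The forward direction's analogue, extracting the trace and multiplicativity estimates from the unitary data, should be routine linear bookkeeping by comparison.
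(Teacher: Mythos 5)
Your forward direction ($G$ MF $\Rightarrow$ $\mathrm{tr}_G$ MF) is sound and amounts to a finitary version of what the paper does, but your reverse direction has a genuine gap at exactly the point you flag as the technical heart, and the resolution you propose rests on a false statement: ``a unital $^*$-homomorphism out of $C^*_\lambda(G)$ is automatically isometric on its image.'' This is false unless the homomorphism is injective: $C^*_\lambda(G)$ is very far from simple in general (for amenable $G$ one has $C^*_\lambda(G) \cong C^*(G)$, which surjects onto $\mathbb C$ via the trivial representation, so there are many non-isometric unital $^*$-homomorphisms out of it). The ingredient you are missing is the faithfulness of the canonical trace $\mathrm{tr}_G$: in the paper's proof, the approximants are assembled into a single unital $^*$-homomorphism $\phi \colon C^*_\lambda(G) \rightarrow \mathcal Q_\omega$ with $\mathrm{tr}_{\mathcal Q_\omega} \circ \phi = \mathrm{tr}_G$, and then $\phi$ is faithful \emph{because} it is trace-preserving and $\mathrm{tr}_G$ is faithful; faithfulness, not unitality, is what forces $\phi$ to be isometric and yields the norm condition \ref{def:MF-group3}, with compactness of the coefficient polydisc $\{(c_g): \max_g |c_g| \le 1\}$ giving the required uniformity at a finite stage. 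Without invoking faithfulness of $\mathrm{tr}_G$ there is no mechanism in your argument producing the lower bound $\|\sum c_g u_g\| \geq \|\sum c_g \lambda^G_g\| - \epsilon$.

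Relatedly, the quantitative stability you assert---that an approximately multiplicative, approximately trace-preserving self-adjoint map on a finite set is close to a genuine $^*$-homomorphism ``on the relevant words,'' with tolerance depending only on $\epsilon$ and $|\mathcal G|$---is not a theorem and cannot be: $\|\sum c_g \lambda^G_g\|$ is a global invariant of $C^*_\lambda(G)$ and is not certified by data on $\mathcal G \cup \mathcal G^2$. A correct finitary certificate for the lower bound goes through $\|a\| = \lim_m \mathrm{tr}_G((a^*a)^m)^{1/2m}$ (faithfulness again), which requires including words of length depending on the coefficient tuple, and the upper bound requires approximate contractivity of $\phi$, which Definition~\ref{def:MF-trace} does not provide: the maps there are neither bounded nor unital. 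The paper explicitly flags this subtlety and handles it by citing \cite[Proposition~2.3]{Rainone-Schafhauser19} to arrange ``approximately bounded'' maps, together with the corner trick $\phi_0(1)\mathcal Q_\omega \phi_0(1) \cong \mathcal Q_\omega$ (trace-preserving by uniqueness of the trace on $\mathcal Q_\omega$) to repair non-unitality. Your parenthetical that the $\phi(\lambda^G_g)$ are ``approximately multiplicative contractions and approximately unitary'' silently assumes all of this: a priori $\phi(1)$ is only close to a projection of nearly full trace, not to $1$, and no norm bound on $\phi$ is given. These are not bookkeeping issues; they are the content of the paper's ultrapower argument, which trades your unavailable uniform stability estimate for a soft compactness argument at the level of $\mathcal Q_\omega$.
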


\begin{proof}
As both notions of MF are local properties, it suffices to prove the result when $G$ is countable.  Let $\omega$ be a free ultrafilter of $\mathbb N$, let $\mathcal Q_\omega$ be the corresponding norm ultrapower of $\mathcal Q$, and let $\mathrm{tr}_{\mathcal Q_\omega}$ be the trace on $\mathcal Q_\omega$ induced by $\mathrm{tr}_\mathcal Q$.

If $\mathrm{tr}_G$ is MF, then there is a unital $^*$-homomorphism $\phi \colon C^*_\lambda(G) \rightarrow \mathcal Q_\omega$ with $\mathrm{tr}_{\mathcal Q_\omega} \circ \phi = \mathrm{tr}_G$.  Its worth noting that there is a subtlety here arising from the fact that the maps in Definition~\ref{def:MF-trace} are not assumed to be bounded or unital.  However, it's possible to arrange the maps $\phi$ in the definition to be ``approximately bounded'' in a suitable sense, which is sufficient to produce a (possibly non-unital) $^*$-homomorphism $\phi_0$ with $\mathrm{tr}_{\mathcal Q_\omega} \phi_0 = \mathrm{tr}_G$; see \cite[Proposition~2.3]{Rainone-Schafhauser19} for details.  Since any non-zero corner of $\mathcal Q_\omega$ is isomorphic to $\mathcal Q_\omega$ (see \cite[Proposition~1.3(i)]{Tikuisis-White-Winter17}, for example), the desired map $\phi$ is given by composing $\phi_0$ with an isomorphism $\phi_0(1) \mathcal Q_\omega \phi_0(1) \cong \mathcal Q_\omega$, which is necessarily trace-preserving by the uniqueness of the trace on $\mathcal Q_\omega$ (see \cite[Theorem~8]{Ozawa13}, for example).

Define $u \colon G \rightarrow U(\mathcal Q_\omega)$ by $u_g = \phi(\lambda_g^G)$.  As every unitary in $\mathcal Q_\omega$ is represented by a sequence of unitaries in $\mathcal Q$, there is a sequence of functions $(u^k \colon G \rightarrow U(\mathcal Q))_{k=1}^\infty$ such that $(u^k_g)_{k=1}^\infty$ represents $u_g$ for all $g \in G$.
Then 
\begin{align}
	\lim_{k \rightarrow \omega} \|u^k_{gh} - u^k_g u^k_h\| &= 0, \qquad g, h \in G,
\intertext{and}
	\lim_{k \rightarrow \omega} \mathrm{tr}_\mathcal Q (u^k_g) &= 0, \qquad g \in G \setminus \{1\}.
\end{align}
Further, as $\mathrm{tr}_G$ is faithful and $\mathrm{tr}_{\mathcal Q_\omega} \circ \phi = \mathrm{tr}_G$, we have that $\phi$ is faithful. Thus $\phi$ is isometric, being a faithful $^*$-homomorphism, and hence for all finite sets $\mathcal G \subseteq G$ and $(c_g)_{g \in \mathcal G} \subseteq \mathbb C$,
\begin{equation}\label{eq:approx-contained}
	\lim_{k \rightarrow \omega} \Big\| \sum_{g \in \mathcal G} c_g u^k_g \Big\| = \Big\| \sum_{g \in \mathcal G} c_g \lambda^G_g \Big\|.
\end{equation}
It follows that given a finite set $\mathcal G \subseteq G$ and $\epsilon > 0$, there is $k \in \mathbb N$ such that $u^k$ satisfies the conditions of Definition~\ref{def:MF-group} (with $U(\mathcal Q)$ in place of $U(d)$), using \eqref{eq:approx-contained} and the compactness of $\big\{(c_g)_{g \in \mathcal G} : \max\limits_{g \in \mathcal G} |c_g| \leq 1\big\}$ to realize \ref{def:MF-group3}. 

The converse has a similar flavor.  Let $(\mathcal G_k)_{k=1}^\infty$ be an increasing sequence of finite subsets of $G$ with union $G$, and for each $k \in \mathbb N$, fix a function $u^k \colon G \rightarrow U(\mathcal Q)$ as in Definition~\ref{def:MF-group} with $\mathcal G_k$ and $1/k$ in place of $\mathcal G$ and $\epsilon$.  Let $u \colon G \rightarrow U(\mathcal Q_\omega)$ denote the group homomorphism induced by the sequence $(u^k)_{k=1}^\infty$ and note that $\mathrm{tr}_{\mathcal Q_\omega}(u_g) = 0$ for all $g \in G \setminus\{1\}$.

There is a unital $^*$-homomorphism $\tilde \phi \colon C^*(G) \rightarrow \mathcal Q_\omega$ with $\tilde\phi(u^G_g) = u_g$.  The norm bounds arising from  Definition~\ref{def:MF-group}\ref{def:MF-group3} imply $\|\tilde\phi(a)\| \leq \|\pi_G(a)\|$ for all $a \in C^*(G)$, and therefore, $\tilde\phi = \phi \circ \pi_G$ for some  unital $^*$-homomorphism $\phi \colon C^*_\lambda(G) \rightarrow \mathcal Q_\omega$.  The trace condition on $u$ implies $\mathrm{tr}_{\mathcal Q_\omega} \circ \phi = \mathrm{tr}_G$.

If $(\phi_k \colon C^*_\lambda(G) \rightarrow \mathcal Q)_{k=1}^\infty$ is a sequence of self-adjoint linear maps lifting $\phi$, then for all $a, b \in C^*_\lambda(G)$,
\begin{equation}
	\lim_{k \rightarrow \omega} \|\phi_k(ab) - \phi_k(a) \phi_k(b)\| = 0 \quad \text{and} \quad \lim_{k \rightarrow \omega} \mathrm{tr}_\mathcal Q(\phi_k(a)) = \mathrm{tr}_G(a).
\end{equation}
It follows that given a finite set $\mathcal F \subseteq C^*_\lambda(G)$ and $\epsilon > 0$, there is $k \in \mathbb N$ such that $\phi_k$ satisfies the conditions of Definition~\ref{def:MF-trace} (with $\mathcal Q$ in place of $M_d$).
\end{proof}

The following three deep results form the core of the proof of Theorem~\ref{thm:main}.  The first is a essentially a restatement of the positive solution of Rosenberg's conjecture in \cite{Tikuisis-White-Winter17} stating that amenable group $C^*$-algebras are quasidiagonal.

\begin{theorem}[cf.\ {\cite[Corollary~C]{Tikuisis-White-Winter17}}]\label{thm:qd}
	Amenable groups are MF.
\end{theorem}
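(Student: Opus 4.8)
The plan is to reduce to the trace formulation via Proposition~\ref{prop:MF} and then invoke the quasidiagonality theorem of Tikuisis, White, and Winter. By Proposition~\ref{prop:MF}, it suffices to show that for an amenable group $G$, the canonical trace $\mathrm{tr}_G$ on $C^*_\lambda(G)$ is MF. Since the MF property for groups is local and passes to subgroups, and since the conditions in Definition~\ref{def:MF-group} involve only a finite set $\mathcal G$ (hence the subgroup it generates, which is countable), I would first reduce to the case that $G$ is countable. Amenability is inherited by subgroups, so this subgroup remains amenable, and then $C^*_\lambda(G)$ is separable.

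The next observation is that the MF condition on a trace is formally weaker than quasidiagonality of that trace. Recall that $\mathrm{tr}_G$ is a \emph{quasidiagonal trace} if there exist contractive completely positive maps $\phi_n \colon C^*_\lambda(G) \rightarrow M_{k_n}$ which are asymptotically multiplicative and satisfy $\mathrm{tr}_{k_n} \circ \phi_n \rightarrow \mathrm{tr}_G$ pointwise. Such maps are in particular self-adjoint and linear, so they immediately witness the conditions of Definition~\ref{def:MF-trace}, where the approximating maps are required neither to be unital nor contractive. Thus it is enough to prove that $\mathrm{tr}_G$ is a quasidiagonal trace.

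For this I would apply the main result of \cite{Tikuisis-White-Winter17}: every faithful trace on a separable, nuclear $C^*$-algebra satisfying the UCT is quasidiagonal. The hypotheses are verified as follows. For $G$ amenable the full and reduced group $C^*$-algebras coincide and $C^*_\lambda(G)$ is nuclear; it is separable by the reduction above; it lies in the UCT class since the $C^*$-algebra of a discrete amenable group is in the bootstrap class; and the canonical trace $\mathrm{tr}_G$ on $C^*_\lambda(G)$ is faithful. Applying the theorem shows that $\mathrm{tr}_G$ is quasidiagonal, hence MF, and Proposition~\ref{prop:MF} then gives that $G$ is MF. This is essentially the content of \cite[Corollary~C]{Tikuisis-White-Winter17}.

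The main obstacle is entirely concentrated in the quasidiagonality theorem of \cite{Tikuisis-White-Winter17}, which is a deep input that I would treat as a black box; the verification of its hypotheses and the reductions described above are routine. On the elementary side, the only genuinely structural point is recognizing that MF is weaker than quasidiagonality of the trace, so that no sharpening of the Tikuisis--White--Winter approximations (for instance to recover norm information as in Definition~\ref{def:MF-group}\ref{def:MF-group3}) is needed here.
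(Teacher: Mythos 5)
Your proposal is correct and follows essentially the same route as the paper: reduce to countable $G$ (MF being a local property), observe that quasidiagonality of $\mathrm{tr}_G$ is formally stronger than the MF condition of Definition~\ref{def:MF-trace} since the completely positive contractive maps are in particular self-adjoint and linear, and invoke \cite[Theorem~A]{Tikuisis-White-Winter17}. The one difference is in justifying the UCT hypothesis: you assert that $C^*_\lambda(G)$ lies in the bootstrap class, whereas the paper derives the UCT from a-T-menability of countable amenable groups \cite{Bekka-Cherix-Valette93} together with \cite[Proposition~10.7]{Tu99} --- worth noting, since bootstrap-class membership for amenable group $C^*$-algebras is itself a nontrivial fact whose standard proof is exactly that route, so a citation is needed at that step.
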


\begin{proof}
	The proof of \cite[Corollary~C]{Tikuisis-White-Winter17} shows that $\mathrm{tr}_G$ is quasidiagonal, which (by definition) means the conditions of Definition~\ref{def:MF-trace} hold with completely positive contractive maps $\phi$.  Indeed, we may assume $G$ is countable as MF is a local property.  In this case, $G$ is a-T-menable by the main result of \cite{Bekka-Cherix-Valette93}, and hence satisfies the UCT by \cite[Proposition~10.7]{Tu99}.  The claim follows from  \cite[Theorem~A]{Tikuisis-White-Winter17}.
\end{proof}

The second result, taken from \cite{Hayes15}, is a consequence of a large body of work in free probability, including \cite{Voiculescu91, Haagerup-Thorbjornsen05, Collins-Male14}

\begin{theorem}[{\cite[Proposition~3.11(v)]{Hayes15}}]\label{thm:free}
	Free products of MF groups are MF.
\end{theorem}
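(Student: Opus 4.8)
\emph{The plan} is to pass to the trace formulation via Proposition~\ref{prop:MF} and to realize the reduced group $C^*$-algebra of a free product as a \emph{reduced free product} of $C^*$-algebras. Since MF is a local property, it suffices to treat the free product of two countable MF groups $G$ and $H$; an induction then handles finitely many free factors, and locality handles arbitrarily many. Recall that $C^*_\lambda(G * H)$ is canonically isomorphic to the reduced free product of $(C^*_\lambda(G), \mathrm{tr}_G)$ and $(C^*_\lambda(H), \mathrm{tr}_H)$, with the free product trace equal to $\mathrm{tr}_{G*H}$. Thus, by Proposition~\ref{prop:MF} and the reasoning in its proof, it is enough to produce a trace-preserving (hence faithful, hence isometric) unital $^*$-homomorphism from this reduced free product into $\mathcal Q_\omega$.

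The MF hypotheses on $G$ and $H$ give, again via Proposition~\ref{prop:MF}, trace-preserving unital $^*$-homomorphisms $\phi \colon C^*_\lambda(G) \to \mathcal Q_\omega$ and $\psi \colon C^*_\lambda(H) \to \mathcal Q_\omega$. I would lift these to sequences of self-adjoint maps $(\phi_n)$ and $(\psi_n)$ landing in matrix subalgebras $M_{d_n} \subseteq \mathcal Q$ (amplifying to a common dimension), and choose Haar-distributed random unitaries $w_n \in U(d_n)$ independent of the $\phi_n, \psi_n$. Set $\psi_n'(\cdot) = w_n \psi_n(\cdot) w_n^*$. By construction, the deterministic matrices $\{\phi_n(\lambda^G_g)\}_{g \in G}$ and $\{\psi_n(\lambda^H_h)\}_{h \in H}$ converge strongly (in moments and in operator norm, along $\omega$) to $\phi(\lambda^G_g)$ and $\psi(\lambda^H_h)$ in $\mathcal Q_\omega$.

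The crux is the strong asymptotic freeness of Haar and deterministic matrices. Applying the theorem of Collins and Male~\cite{Collins-Male14} (building on Voiculescu~\cite{Voiculescu91} and Haagerup--Thorbj{\o}rnsen~\cite{Haagerup-Thorbjornsen05}) to the Haar unitaries $w_n$ and the deterministic family above, the system $\big(\{\phi_n(\lambda^G_g)\}, \{\psi_n(\lambda^H_h)\}, w_n\big)$ converges strongly, almost surely, to $\big(\{\phi(\lambda^G_g)\}, \{\psi(\lambda^H_h)\}, w\big)$ with $w$ a Haar unitary free from the two families. Since conjugation by a free Haar unitary creates freeness, $\{\phi(\lambda^G_g)\}$ and $\{w\,\psi(\lambda^H_h)\,w^*\}$ are free; as $\phi$ and $\psi$ are faithful, these generate free copies of $(C^*_\lambda(G), \mathrm{tr}_G)$ and $(C^*_\lambda(H), \mathrm{tr}_H)$, and their joint distribution is exactly that of the reduced free product, namely $(C^*_\lambda(G*H), \mathrm{tr}_{G*H})$. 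Because strong convergence passes to $^*$-polynomials, writing $x$ in the algebraic free product as a word $P$ in the generators yields $\|P(\phi_n(\lambda^G_g), w_n\psi_n(\lambda^H_h)w_n^*)\| \to \|x\|_{C^*_\lambda(G*H)}$ together with convergence of the trace to $\mathrm{tr}_{G*H}(x)$. Fixing a realization of the $w_n$ for which these hold simultaneously (a single almost-sure event suffices over the countably many test words), the resulting sequence of self-adjoint maps assembles through $\omega$ into a unital, trace-preserving, isometric $^*$-homomorphism $C^*_\lambda(G*H) \to \mathcal Q_\omega$, which by Proposition~\ref{prop:MF} shows $G * H$ is MF.

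I expect the genuine obstacle to be the norm control, as opposed to the moment control, in the previous step. The convergence of mixed moments---that conjugating one family by an independent Haar unitary makes it asymptotically free from the other---is the classical asymptotic freeness of Voiculescu and already delivers trace-preservation. What is deep, and the reason this theorem rests on heavy free-probability machinery, is that the \emph{operator norms} of the conjugated words converge to the reduced free product norm; this is precisely the strong convergence of Haagerup--Thorbj{\o}rnsen, extended to deterministic matrices by Collins--Male. Without it one would control only the $\mathrm{tr}_{G*H}$-GNS norm and could not exclude a strictly larger norm in $\mathcal Q_\omega$, so the passage from an approximately multiplicative map to an \emph{isometric} $^*$-homomorphism on all of $C^*_\lambda(G*H)$ would break down.
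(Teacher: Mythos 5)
The paper contains no proof of Theorem~\ref{thm:free}: it is imported verbatim from \cite[Proposition~3.11(v)]{Hayes15}, whose proof is precisely the route you outline --- reduce to the trace formulation via Proposition~\ref{prop:MF}, identify $C^*_\lambda(G*H)$ with the reduced free product of $(C^*_\lambda(G),\mathrm{tr}_G)$ and $(C^*_\lambda(H),\mathrm{tr}_H)$, and use strong asymptotic freeness of Haar-conjugated matrix models \cite{Collins-Male14} to build a trace-preserving isometric embedding into $\mathcal Q_\omega$. Your closing diagnosis is also exactly right: asymptotic freeness in moments (Voiculescu) already gives trace preservation, and the deep input is the operator-norm convergence of \cite{Haagerup-Thorbjornsen05,Collins-Male14}.

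There is, however, one step that is misquoted in a way that matters. You invoke the Collins--Male theorem with ``the deterministic family'' taken to be the \emph{union} $\{\phi_n(\lambda^G_g)\}\cup\{\psi_n(\lambda^H_h)\}$, on the grounds that this family ``converges strongly \ldots{} in $\mathcal Q_\omega$''. Strong convergence in the sense required by \cite{Collins-Male14} means convergence of $^*$-moments and of norms to a limit in a $C^*$-probability space with \emph{faithful} trace. But $\mathrm{tr}_{\mathcal Q_\omega}$ is not faithful, and there is no reason it should be faithful on the $C^*$-algebra generated \emph{jointly} by $\phi(C^*_\lambda(G))$ and $\psi(C^*_\lambda(H))$: the lifts $\phi_n,\psi_n$ were chosen independently of one another, so a mixed word in the two families can have norm bounded away from $0$ along $\omega$ while its moments vanish, in which case the limiting trace is degenerate on the joint algebra and the one-family theorem simply does not apply to the union. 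Indeed, if you already knew the joint family converged strongly to a faithful limit, you would be most of the way to the conclusion. What your construction genuinely provides is \emph{marginal} strong convergence of each family separately, to the faithful spaces $(C^*_\lambda(G),\mathrm{tr}_G)$ and $(C^*_\lambda(H),\mathrm{tr}_H)$. The statement you need --- and the form in which \cite{Hayes15} extracts it from \cite{Collins-Male14} --- is the two-family/free-product version: if two families of deterministic matrices each converge strongly, then after conjugating one of them by independent Haar unitaries the joint family converges strongly, almost surely, to a pair in free position; that is, strong convergence is stable under reduced free products. With that version quoted in place of the one-family theorem, the remainder of your argument (faithfulness of the free-product trace, a single almost-sure event over countably many test polynomials, assembly of an isometric trace-preserving $^*$-homomorphism $C^*_\lambda(G*H)\to\mathcal Q_\omega$, and the reduction to finitely many countable factors by locality) is sound. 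One further routine repair: Collins--Male is an almost-sure statement about honest sequences with $d_n\to\infty$, so before invoking it you should extract from the ultrafilter a genuine subsequence along which the marginal strong limits hold, rather than working with $\omega$-limits directly.
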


The following is a simple consequence of the classification results from \cite{Schafhauser20} after specializing to the case of unital embeddings $C^*_\lambda(G) \rightarrow \mathcal Q$.

\begin{theorem}[cf.\ {\cite[Thoerem~B]{Schafhauser20}}]\label{thm:classification}\label{thm:af-emb}
	If $G$ is a countable amenable group, then there is a group homomorphism $u \colon G \rightarrow U(\mathcal Q)$ such that $\mathrm{tr}_\mathcal Q(u_g) = 0$ for all $g \in G \setminus \{1\}$, and this $u$ is unique up to approximate unitary equivalence.
\end{theorem}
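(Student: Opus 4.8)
The plan is to restate the theorem as the existence and uniqueness (up to approximate unitary equivalence) of a unital trace-preserving embedding $C^*_\lambda(G) \to \mathcal Q$, and then invoke \cite[Theorem~B]{Schafhauser20}. First I would set up the dictionary: by the universal property of $C^*(G)$, a group homomorphism $u \colon G \to U(\mathcal Q)$ is the same as a unital ${}^*$-homomorphism $\tilde\phi \colon C^*(G) \to \mathcal Q$ with $\tilde\phi(u^G_g) = u_g$, and the trace condition $\mathrm{tr}_\mathcal Q(u_g) = 0$ for $g \neq 1$ is precisely $\mathrm{tr}_\mathcal Q \circ \tilde\phi = \mathrm{tr}_G \circ \pi_G$. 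Since $G$ is amenable, $\pi_G$ is an isomorphism, so $\tilde\phi$ descends to a unital ${}^*$-homomorphism $\phi \colon C^*_\lambda(G) \to \mathcal Q$ with $\mathrm{tr}_\mathcal Q \circ \phi = \mathrm{tr}_G$; as $\mathrm{tr}_G$ is faithful, $\phi$ is automatically injective. Under this correspondence approximate unitary equivalence of the homomorphisms $u$ matches approximate unitary equivalence of the maps $\phi$, so it suffices to classify unital trace-preserving embeddings $C^*_\lambda(G) \to \mathcal Q$.

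Next I would check the hypotheses of the classification. The algebra $C^*_\lambda(G)$ is unital and separable (as $G$ is countable) and nuclear (as $G$ is amenable), and it satisfies the UCT: countable amenable groups are a-T-menable by \cite{Bekka-Cherix-Valette93}, hence satisfy the UCT by \cite[Proposition~10.7]{Tu99}, exactly as in the proof of Theorem~\ref{thm:qd}. The trace $\mathrm{tr}_G$ is faithful and, by nuclearity, amenable (consistent with its being quasidiagonal, cf.\ Theorem~\ref{thm:qd}). The existence half of \cite[Theorem~B]{Schafhauser20} then produces a unital trace-preserving embedding $\phi \colon C^*_\lambda(G) \to \mathcal Q$, yielding the desired $u$.

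The step I expect to carry the real content for uniqueness is seeing that the $K$-theoretic part of the classifying invariant is forced by the trace because the target is $\mathcal Q$. The uniqueness half asserts that two such embeddings are approximately unitarily equivalent once they induce the same trace and the same map on total $K$-theory. Now $K_1(\mathcal Q) = 0$ and $K_0(\mathcal Q) = \mathbb Q$, and since $\mathbb Q$ is divisible the $K$-theory of $\mathcal Q$ with $\mathbb Z/n$ coefficients vanishes, so the only surviving piece of the total $K$-theory of $\mathcal Q$ is $K_0(\mathcal Q) = \mathbb Q$. The map $K_0(\mathcal Q) \to \mathbb R$ induced by $\mathrm{tr}_\mathcal Q$ is the inclusion $\mathbb Q \hookrightarrow \mathbb R$, in particular injective, so for any trace-preserving $\phi$ the induced homomorphism $K_0(C^*_\lambda(G)) \to K_0(\mathcal Q) = \mathbb Q$ is recovered from the real-valued pairing $K_0(C^*_\lambda(G)) \to \mathbb R$ determined by $\mathrm{tr}_G$, while the induced map on $K_1$ is necessarily zero. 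Hence any two unital trace-preserving embeddings induce the same invariant and are therefore approximately unitarily equivalent, completing the proof. The genuinely deep input (upgrading quasidiagonal approximations to an honest ${}^*$-homomorphism, and the uniqueness theorem itself) is imported from \cite{Schafhauser20}; the work left to us is the translation to $C^*$-algebras and this collapse of the $K$-theoretic invariant onto the trace.
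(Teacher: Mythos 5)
Your proposal is correct and takes essentially the same route as the paper's proof: existence via \cite[Theorem~B]{Schafhauser20} after translating group homomorphisms $G \to U(\mathcal Q)$ into unital trace-preserving ${}^*$-homomorphisms $C^*_\lambda(G) \to \mathcal Q$ (using amenability to pass through $\pi_G$ and verifying the UCT via \cite{Bekka-Cherix-Valette93, Tu99}), and uniqueness by reducing to the classification theorem with the $K$-theoretic invariant forced by the trace because $K_0(\mathcal Q) \to \mathbb R$ induced by $\mathrm{tr}_\mathcal Q$ is injective. Your additional observation that the total $K$-theory of $\mathcal Q$ collapses to $K_0(\mathcal Q) = \mathbb Q$ by divisibility is a correct, slightly more detailed rendering of the step the paper handles by citing \cite[Corollary~5.4(2)]{Schafhauser20} and checking agreement on $K_0$ alone.
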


\begin{proof}
	By \cite[Theorem~B]{Schafhauser20}, there is a unital embedding $\phi \colon C^*_\lambda(G) \rightarrow \mathcal Q$ such that $\mathrm{tr}_\mathcal Q \circ \phi = \mathrm{tr}_G$.  Define $u_g = \phi(\lambda^G_g)$ for $g \in G$.  To see uniqueness, suppose that $v \colon G \rightarrow U(\mathcal Q)$ is another group homomorphism with $\mathrm{tr}_\mathcal Q(v_g) = 0$ for all $g \in G \setminus \{1\}$.  Since $G$ is amenable, there is a $^*$-homomorphism $\psi \colon C^*_\lambda(G) \rightarrow \mathcal Q$ with $\psi(\lambda_g^G) = v_g$ for all $g \in G$.  Note that $\mathrm{tr}_\mathcal Q \circ \psi = \mathrm{tr}_G$.
		
	It suffices to show that $\phi$ and $\psi$ are approximately unitarily equivalent.  The conditions of \cite[Corollary~5.4(2)]{Schafhauser20} apply (the UCT for the group algebra follows from \cite{Bekka-Cherix-Valette93, Tu99}, as in the proof of Theorem~\ref{thm:qd}) and  $\mathrm{tr}_\mathcal Q \circ \phi = \mathrm{tr}_\mathcal Q \circ \psi$, so it is enough to show that $K_0(\phi) = K_0(\psi)$.  Since the map $K_0(\mathcal Q) \rightarrow \mathbb R$ induced by $\mathrm{tr}_\mathcal Q$ is injective, the agreement on trace implies agreement on $K_0$.
\end{proof}

\section{The proof of Theorem~\ref{thm:main}}

The following lemma, motivated by \cite[Theorem~3.9]{Rainone-Schafhauser19}, gives a sufficient condition for a group to be MF.

\begin{lemma}\label{lemma}
Suppose $G$ is a group with a normal amenable subgroup $N$ such that $G / N$ is exact and MF.  Then $G$ is MF if and only if there are a unital \mbox{$C^*$-algebra} $A$ with an MF trace $\mathrm{tr}_A$ and a group homomorphism $u \colon G \rightarrow U(A)$ such that $\mathrm{tr}_A(u_n) = 0$ for all $n \in N \setminus \{1\}$. 
\end{lemma}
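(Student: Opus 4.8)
The plan is to prove the two directions separately, with essentially all of the content in the reverse implication. The forward direction is immediate: if $G$ is MF, then $\mathrm{tr}_G$ is MF by Proposition~\ref{prop:MF}, so one takes $A = C^*_\lambda(G)$, $\mathrm{tr}_A = \mathrm{tr}_G$, and $u = \lambda^G$; since $\mathrm{tr}_G(\lambda^G_g) = 0$ for every $g \in G \setminus \{1\}$, the required vanishing on $N \setminus \{1\}$ holds a fortiori.

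For the converse I would work in a norm ultrapower $\mathcal Q_\omega$ and aim to produce a unital trace-preserving $^*$-homomorphism $C^*_\lambda(G) \to \mathcal Q_\omega$, which gives that $G$ is MF by the converse half of Proposition~\ref{prop:MF}. First, since $\mathrm{tr}_A$ is MF, I would realize it as the pullback of $\mathrm{tr}_{\mathcal Q_\omega}$ along a unital $^*$-homomorphism $\Phi \colon A \to \mathcal Q_\omega$ (as in the proof of Proposition~\ref{prop:MF}, using a corner isomorphism to arrange unitality) and set $v = \Phi \circ u \colon G \to U(\mathcal Q_\omega)$; this is a group homomorphism with $\mathrm{tr}_{\mathcal Q_\omega}(v_n) = 0$ for all $n \in N \setminus \{1\}$. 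Separately, since $G/N$ is MF, I would fix a unital trace-preserving $^*$-homomorphism $\rho \colon C^*_\lambda(G/N) \to \mathcal Q_\omega$ and set $w = \rho \circ \lambda^{G/N} \circ q$, where $q \colon G \to G/N$ is the quotient map; then $\mathrm{tr}_{\mathcal Q_\omega}(w_g)$ equals $1$ if $g \in N$ and $0$ otherwise. Here $v|_N$ gives a faithful copy of $C^*_\lambda(N)$, using amenability of $N$ together with faithfulness of $\mathrm{tr}_N$ (one could also invoke the uniqueness in Theorem~\ref{thm:af-emb} to pin down $v|_N$).

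The key construction is to combine these via a tensor product. Using $\mathcal Q \otimes \mathcal Q \cong \mathcal Q$, I would represent $v$ and $w$ by sequences of unitaries in $\mathcal Q$ and form $V = v \otimes w \colon G \to U(\mathcal Q_\omega)$. Because $\mathrm{tr}_{\mathcal Q_\omega}$ is multiplicative across the tensor factors, $\mathrm{tr}_{\mathcal Q_\omega}(V_g) = \mathrm{tr}_{\mathcal Q_\omega}(v_g)\,\mathrm{tr}_{\mathcal Q_\omega}(w_g)$, which vanishes for every $g \in G \setminus \{1\}$: the first factor kills the case $g \in N \setminus \{1\}$ and the second kills the case $g \notin N$. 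Thus $V$ carries the correct trace to induce $\mathrm{tr}_G$, and it remains only to verify the reduced-norm condition, namely that $V$ is weakly contained in $\lambda^G$.

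This last step is where I expect the real difficulty, and where the exactness of $G/N$ should enter. The natural route is relative Fell absorption: as a representation of $G$, the map $w$ is weakly equivalent to the quasi-regular representation $\lambda_{G/N} = \mathrm{Ind}_N^G 1$ (as $\rho$ is faithful), so $V$ is weakly equivalent to $v \otimes \lambda_{G/N} \cong \mathrm{Ind}_N^G(v|_N)$. Since $N$ is amenable, $v|_N \prec \lambda^N$, and continuity of induction under weak containment yields $\mathrm{Ind}_N^G(v|_N) \prec \mathrm{Ind}_N^G(\lambda^N) = \lambda^G$, giving the estimate $\|\sum_g c_g V_g\| \le \|\sum_g c_g \lambda^G_g\|$. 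The main obstacle is to carry this spatial argument faithfully through the minimal tensor product inside $\mathcal Q_\omega$: one must know that the $\mathcal Q_\omega$-norm of $\sum_g c_g V_g$ is computed by the minimal tensor norm of $v$ and $w$, and that the resulting covariant data assembles into a representation of the reduced crossed product $C^*_\lambda(N) \rtimes_r (G/N) \cong C^*_\lambda(G)$ rather than the full one. Exactness of $G/N$ is precisely the hypothesis that legitimizes this passage to the reduced norm. Once the weak-containment estimate is secured, $V$ induces the desired unital trace-preserving $^*$-homomorphism $C^*_\lambda(G) \to \mathcal Q_\omega$, and Proposition~\ref{prop:MF} finishes the argument.
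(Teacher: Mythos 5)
Your forward direction and the skeleton of your converse---tensoring the given unitaries $u_g$ against the quasi-regular representation $\lambda^{G/N}\circ q$ so that the trace of every nontrivial group element is killed by one factor or the other---are exactly the paper's. Your Fell-absorption computation is also sound as far as it goes: for honest Hilbert-space representations, $v\otimes(\lambda^{G/N}\circ q)\cong \mathrm{Ind}_N^G(v|_N)\prec \mathrm{Ind}_N^G\lambda^N=\lambda^G$ by amenability of $N$, and this is a legitimate alternative to the paper's mechanism for factoring through the reduced algebra (the paper instead notes that $\mathrm{id}_A\otimes\mathrm{tr}_{G/N}$ is a \emph{faithful} conditional expectation intertwining $\tilde\phi$ with $\tilde\phi_0\circ E$, and that $\pi_N$ is an isomorphism since $N$ is amenable, so every positive element of $\ker\pi_G$ is killed by $\tilde\phi$; note in passing that your identification $C^*_\lambda(G)\cong C^*_\lambda(N)\rtimes_r(G/N)$ is false in general---one needs a twisted crossed product unless the extension splits). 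But there is a genuine gap at precisely the step you flag as the main obstacle, and your proposed fix does not work as set up. You define $V_g=v_g\otimes w_g$ via representing sequences, so the norm you must control is the ultralimit $\lim_{\omega}\big\|\sum_g c_g\, v^i_g\otimes w^i_g\big\|$. This C*-norm on the algebraic tensor product of the two copies of $\mathcal Q_\omega$ dominates the minimal tensor norm but may a priori strictly exceed it: minimal tensor products do not commute with norm ultrapowers. Your weak-containment argument bounds only the minimal norm of $\sum_g c_g\, v_g\otimes w_g$, hence does not bound $\|\sum_g c_g V_g\|_{\mathcal Q_\omega}$. Moreover, exactness of $G/N$ cannot ``legitimize'' the passage in your formulation, because you have placed \emph{both} tensor legs inside the ultrapower, and $\mathcal Q_\omega$ is not exact. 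What exactness actually buys (Kirchberg's tensor--quotient commutation) is an isometric embedding $\mathcal Q_\omega\otimes_{\min} B\hookrightarrow(\mathcal Q\otimes B)_\omega$ for \emph{separable exact} $B$; it applies only if $B=C^*_\lambda(G/N)$ is retained as an honest tensor factor rather than replaced by an ultrapower element $w$.

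The repair lands you on the paper's proof: keep the second leg abstract by defining $\tilde\phi(u^G_g)=u_g\otimes\lambda^{G/N}_{q(g)}$ in the spatial tensor product $A\otimes C^*_\lambda(G/N)$, establish the factorization $\phi\colon C^*_\lambda(G)\to A\otimes C^*_\lambda(G/N)$ (by your induction argument or by the paper's faithful-expectation argument) together with $(\mathrm{tr}_A\otimes\mathrm{tr}_{G/N})\circ\phi=\mathrm{tr}_G$, and then invoke \cite[Proposition~3.6]{Rainone-Schafhauser19}: since $C^*_\lambda(G/N)$ is exact and $\mathrm{tr}_A$, $\mathrm{tr}_{G/N}$ are MF, the tensor trace $\mathrm{tr}_A\otimes\mathrm{tr}_{G/N}$ is MF, so composing its MF approximations with $\phi$ yields MF approximations of $\mathrm{tr}_G$. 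That proposition is exactly the unproven content hiding in your final paragraph---it is where exactness enters the paper's argument, not in any identification of crossed-product norms. In other words, your sketch correctly isolates the crux but asserts rather than proves it, and the assertion in the form you state it (computing the $\mathcal Q_\omega$-norm of $V$ by the minimal tensor norm of two ultrapower representations) is not a consequence of exactness and is likely false in general.
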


\begin{proof}
For the forward direction, take $A = C^*_\lambda(G)$ and $\mathrm{tr}_A = \mathrm{tr_G}$ (which is MF by Proposition~\ref{prop:MF}) and define $u_g = \lambda^G_g$.  To see the converse, let $q \colon G \rightarrow G/N$ denote the quotient map and let $\tilde\phi_0 \colon C^*(N) \rightarrow A$ and $\tilde \phi \colon C^*(G) \rightarrow A \otimes C^*_\lambda(G / N)$ be the $^*$-homomorphisms given by
\begin{equation}
	\tilde\phi_0(u_n^N) = u_n \quad \text{and} \quad \tilde\phi(u^G_g) = u_g \otimes \lambda^{G/N}_{q(g)}, \qquad n \in N,\ g \in G,
\end{equation}
where the tensor product is the spatial one.  Since $\mathrm{tr}_A$ and $\mathrm{tr}_{G/N}$ are MF and $C^*_\lambda(G/N)$ is exact, the trace $\mathrm{tr}_A \otimes \mathrm{tr}_{G/N}$ is MF by \cite[Proposition~3.6]{Rainone-Schafhauser19}, so it suffices to show that $\tilde\phi$ factors through a $^*$-homomorphism $\phi \colon C^*_\lambda(G) \rightarrow A \otimes C^*_\lambda(G/N)$ with $(\mathrm{tr}_A \otimes \mathrm{tr}_{G/N}) \circ \phi = \mathrm{tr}_G$.  Indeed, composing MF approximations of $\mathrm{tr}_A \otimes \mathrm{tr}_{G/N}$ with $\phi$ will produce MF approximations for $\mathrm{tr}_G$.

As $N$ is amenable, $\pi_N$ is an isomorphism, and hence $\tilde\phi_0$ factors as $\phi_0 \circ \pi_N$ for a $^*$-homomorphism $\phi_0 \colon C^*_\lambda(N) \rightarrow A$.
Consider the commuting diagram
\begin{equation}\label{eq:diagram}
\begin{tikzcd}[row sep = small, column sep = small]
	&[10pt]C^*_\lambda(G) \arrow{dd}[pos=.6]{E_\lambda} \arrow[dashed]{dr}[near start]{\phi} & \\[-3pt]
	C^*(G) \arrow[crossing over]{rr}[pos=.3, swap]{\tilde\phi} \arrow{dd}[swap]{E} \arrow{ur}[near end]{\pi_G} & & A \otimes C^*_\lambda(G/N) \arrow{dd}{\mathrm{id}_A \otimes \mathrm{tr}_{G/N}} \\[10pt]
	& C^*_\lambda(N) \arrow{dr}[near start]{\phi_0} & \\[-3pt]
	C^*(N) \arrow{rr}[swap]{\tilde\phi_0} \arrow{ur}[near end]{\pi_N} & & A
\end{tikzcd}
\end{equation}
where $E$ and $E_\lambda$ are the canonical conditional expectations.  As $\mathrm{id}_A$ and $\mathrm{tr}_{G/N}$ are faithful, so is $\mathrm{id}_A \otimes \mathrm{tr}_{G/N}$ (see the appendix of \cite{Avitzour82}, for example).  It follows that if $a \in \ker(\pi_G)$ is positive, then $\tilde\phi(a) = 0$.  Hence there is a $^*$-homomorphism $\phi$ making the upper face of \eqref{eq:diagram} commute.  The back right face also commutes by a diagram chase using the surjectivity of $\pi_G$ and the commutativity of the other four faces.  The commutativity of this face implies $\phi$ is trace preserving as $E_\lambda$ and $\phi_0$ are.
\end{proof}

The following is a consequence of the classification result in Theorem~\ref{thm:af-emb}. In the context of Theorem~\ref{thm:main}, this lemma provides MF approximations of $G$ and $H$ which approximately coincide on $N$.  Note that $N$ need not be normal.

\begin{lemma}\label{lem:nets}
	If $G$ and $H$ are amenable groups with a common subgroup $N$, then there are nets of functions $(u^i\colon G \rightarrow U(\mathcal Q))_{i \in I}$ and $(v^i \colon H \rightarrow U(\mathcal Q))_{i \in I}$ such that for all $g_1, g_2 \in G$, $g \in G \setminus \{1\}$, $h_1, h_2 \in H$, $h \in H \setminus \{1\}$, and $n \in N$,
	\begin{enumerate}
		\item $\lim\limits_{i \in I} \|u^i_{g_1g_2} - u^i_{g_1} u^i_{g_2} \| = 0$ and  $\lim\limits_{i \in I} \|v^i_{h_1 h_2} - v^i_{h_1} v^i_{h_2}\| = 0$,
		\item $\lim\limits_{i \in I} \mathrm{tr}_\mathcal Q(u^i_g) = 0$ and $\lim\limits_{i \in I} \mathrm{tr}_\mathcal Q(v^i_h) = 0$, and
		\item $\lim\limits_{i \in I} \|u^i_n - v^i_n\|= 0$.
	\end{enumerate}
\end{lemma}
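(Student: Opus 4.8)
The plan is to extract the functions $u^i$ and $v^i$ from genuine group homomorphisms produced by the existence half of Theorem~\ref{thm:af-emb}, using the uniqueness half of that theorem to force the approximate agreement on $N$ in~(iii). I would first treat the case in which $G$ and $H$ are countable (so that $N$ is a countable amenable group), where the argument is cleanest. Applying the existence statement of Theorem~\ref{thm:af-emb} to $G$ and to $H$ yields group homomorphisms $u \colon G \to U(\mathcal Q)$ and $v \colon H \to U(\mathcal Q)$ with $\mathrm{tr}_\mathcal Q(u_g) = 0$ and $\mathrm{tr}_\mathcal Q(v_h) = 0$ for all nontrivial $g$ and $h$. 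Restricting these to the common subgroup $N$ gives two group homomorphisms $u|_N, v|_N \colon N \to U(\mathcal Q)$, each of which still annihilates the trace off the identity, since $N \setminus \{1\} \subseteq G \setminus \{1\}$ and likewise $N \setminus \{1\} \subseteq H \setminus \{1\}$. Thus both $u|_N$ and $v|_N$ are instances of the homomorphism whose uniqueness up to approximate unitary equivalence is guaranteed by Theorem~\ref{thm:af-emb} applied to $N$.

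By that uniqueness, $u|_N$ and $v|_N$ are approximately unitarily equivalent, so there is a net of unitaries $(w_j)_j$ in $U(\mathcal Q)$ with $\lim_j \|w_j u_n w_j^* - v_n\| = 0$ for every $n \in N$. I would then set $u^j_g = w_j u_g w_j^*$ for $g \in G$ and take $v^j = v$ (a constant net). Each $u^j$ is again a genuine group homomorphism, so the multiplicativity defect in~(i) is identically zero; since $\mathrm{tr}_\mathcal Q$ is conjugation invariant, $\mathrm{tr}_\mathcal Q(u^j_g) = \mathrm{tr}_\mathcal Q(u_g) = 0$ for $g \neq 1$, giving~(ii); and~(iii) is exactly the defining property of the net $(w_j)_j$. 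This settles the countable case with both approximate representations arising from honest homomorphisms.

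For general, possibly uncountable, amenable $G$ and $H$, I would index the net by triples $i = (\mathcal G, \mathcal H, \epsilon)$ with $\mathcal G \subseteq G$ and $\mathcal H \subseteq H$ finite and $\epsilon > 0$, directed by inclusion of the finite sets together with the reverse order on $\epsilon$. For each such $i$, choose a countable subgroup $N_0 \leq N$ containing $(\mathcal G \cup \mathcal H) \cap N$ and set $G_0 = \langle \mathcal G \cup N_0 \rangle$ and $H_0 = \langle \mathcal H \cup N_0 \rangle$; these are countable amenable groups sharing the common countable subgroup $N_0$. Running the countable argument on $G_0, H_0, N_0$ produces homomorphisms $u^{(0)}, v^{(0)}$ and a single unitary $w$ realizing $\epsilon$-agreement on the finite set $(\mathcal G \cup \mathcal H) \cap N$; I then extend $w\, u^{(0)} w^*$ and $v^{(0)}$ by the constant value $1$ outside $G_0$ and $H_0$ to define $u^i$ on $G$ and $v^i$ on $H$. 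A cofinality check shows each of (i)--(iii) holds in the limit: any fixed pair $g_1, g_2$ (resp.\ $h_1, h_2$), any fixed $g$ (resp.\ $h$), and any fixed $n \in N$ eventually lies in the relevant finite set, at which point the corresponding defect is $0$ for (i) and (ii) and at most $\epsilon$ for (iii).

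The main obstacle is organizational rather than conceptual. The classification input, Theorem~\ref{thm:af-emb}, is stated only for countable groups, so the uncountable case forces the net-over-finite-subsets bookkeeping, and one must arrange that the countable subgroups $G_0$ and $H_0$ share a \emph{genuinely common} countable subgroup $N_0$ of $N$, so that the uniqueness clause can be invoked to align the two approximations on $N$. Once the common $N_0$ is in place, the only real content is the single application of the uniqueness half of Theorem~\ref{thm:af-emb}; everything else reduces to conjugation invariance of the trace and a routine cofinality argument.
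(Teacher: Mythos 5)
Your proposal is correct and follows essentially the same route as the paper: both derive the two approximate representations from the existence half of Theorem~\ref{thm:af-emb} applied to $G$ and $H$, invoke the uniqueness half on (a countable subgroup of) $N$ to produce an intertwining unitary $w$, replace one homomorphism by its conjugate $\mathrm{ad}(w)$, and index the net by triples $(\mathcal G, \mathcal H, \epsilon)$. Your treatment of the uncountable case is in fact slightly more explicit than the paper's (which simply replaces $G$ and $H$ by the subgroups generated by $\mathcal G$ and $\mathcal H$), since you arrange a genuinely common countable subgroup $N_0$ inside both countable models and spell out the extension by $1$ and the cofinality check, but this is bookkeeping rather than a different argument.
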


\begin{proof}
	Let $I$ be the set of all triples $(\mathcal G, \mathcal H, \epsilon)$ where $\mathcal G \subseteq G$ and $\mathcal H \subseteq H$ are finite sets and $\epsilon > 0$ and write $(\mathcal G_1, \mathcal H_1, \epsilon_1) \leq (\mathcal G_2, \mathcal H_2, \epsilon_2)$ if $\mathcal G_1 \subseteq \mathcal G_2$, $\mathcal H_1 \subseteq \mathcal H_2$, and $\epsilon_1 > \mathcal \epsilon_2$.  To construct the nets, it suffices to show that for all $i = (\mathcal G, \mathcal H, \epsilon) \in I$, there are functions $u \colon G \rightarrow U(\mathcal Q)$ and $v \colon H \rightarrow U(\mathcal Q)$ such that for all $g_1, g_2 \in \mathcal G$, $g \in \mathcal G \setminus \{1\}$, $h_1, h_2 \in \mathcal H$, $h \in \mathcal H \setminus \{1\}$, and $n \in \mathcal G \cap \mathcal H$,
	\begin{enumerate}
		\item $u_{g_1 g_2} = u_{g_1} u_{g_2}$ and $v_{h_1 h_2} = v_{h_1} v_{h_2}$,
		\item $\mathrm{tr}_\mathcal Q(u_g) = 0$ and  $\mathrm{tr}_\mathcal Q(v_h) = 0$, and
		\item $\|u_n - v_n\| < \epsilon$.
	\end{enumerate}
	
	After replacing $G$ and $H$ with the subgroups generated by $\mathcal G$ and $\mathcal H$, we may assume $G$ and $H$ are countable.  
	By Theorem~\ref{thm:classification}, there are group homomorphisms $u \colon G \rightarrow U(\mathcal Q)$ and $v' \colon H \rightarrow U(\mathcal Q)$ such that $\mathrm{tr}_\mathcal Q(u_g) = 0$ and $\mathrm{tr}_\mathcal Q(v'_h) = 0$ for $g \in G \setminus \{1\}$ and $h \in H \setminus \{1\}$.  By another application of Theorem~\ref{thm:classification}, $u|_N$ and $v'|_N$ are approximately unitarily equivalent.  Fix $w \in  U(\mathcal Q)$ such that for all $n \in \mathcal G \cap \mathcal H$,
	\begin{equation}
		\| u_n - wv'_nw^* \| < \epsilon.
	\end{equation}
	Then $u$ and $v = \mathrm{ad}(w) \circ v'$ satisfy the required properties.
\end{proof}

All that remains in the proof of Theorem~\ref{thm:main} is to patch together the approximate representations from the previous lemma to obtain MF approximations for the amalgamated free product.  While it is easy to obtain approximate finite dimensional unitary representations of $G *_N H$ from Lemma~\ref{lem:nets}, arranging conditions~\ref{def:MF-group2} and~\ref{def:MF-group3} of Definition~\ref{def:MF-group} is harder.  This will be done with the aid of Lemma~\ref{lemma}.

\begin{proof}[Proof of Theorem \ref{thm:main}]
	Note that $N$ is a normal subgroup of $G *_N H$ and there is a canonical isomorphism  $(G *_N H)/N \cong (G/N) * (H/N)$.  As $G / N$ and $H / N$ are amenable, they are exact.  By the main result of \cite{Dykema99}, free products of exact groups are exact, so $(G *_N H)/N$ is exact.  Also, $G/N$ and $H/N$ are MF by Theorem~\ref{thm:qd}, and hence $(G *_N H)/N$ is MF by Theorem~\ref{thm:free}.
	
	Let $(u^i \colon G \rightarrow U(\mathcal Q))_{i \in I}$ and $(v^i \colon H \rightarrow U(\mathcal Q))_{i \in I}$ be nets as in Lemma~\ref{lem:nets}.  Fix an ultrafilter $\omega$ on $I$ such that for all $i_0 \in I$, we have $\{i \in I : i \geq i_0\} \in \omega$.  Let $\mathcal Q_\omega$ denote the corresponding norm ultrapower of $\mathcal Q$ and let $\mathrm{tr}_{\mathcal Q_\omega}$ denote the trace on $\mathcal Q_\omega$ induced by $\mathrm{tr}_\mathcal Q$.  Then $\mathrm{tr}_{\mathcal Q_\omega}$ is MF.
	
	The nets $(u^i)_{i \in I}$ and $(v^i)_{i \in I}$ define group homomorphisms $u \colon G \rightarrow U(\mathcal Q_\omega)$ and $v \colon H \rightarrow U(\mathcal Q_\omega)$ with 
	$\mathrm{tr}_{\mathcal Q_\omega}(u_g) = 0$ and $\mathrm{tr}_{\mathcal Q_\omega}(v_h) = 0$ for all $g \in G \setminus\{1\}$ and $h \in H \setminus \{1\}$ and with $u|_N = v|_N$.  Then there is a group homomorphism $w \colon G *_N H \rightarrow U(\mathcal Q_\omega)$ such that $w|_G = u$ and $w|_H = v$.  In particular, $\mathrm{tr}_{\mathcal Q_\omega}(w_n) = 0$ for all $n \in N \setminus\{1\}$, and the result follows from Lemma~\ref{lemma}.
\end{proof}

\begin{remark}
If $(G_j)_{j \in J}$ is a (possibly infinite) collection of amenable groups with a common normal subgroup $N$, then the amalgamated product $\Asterisk_N\, G_j$ is MF.  The proof reduces to the case $J$ is finite using that MF is a local property, and then the proof is essentially same except Lemma~\ref{lem:nets} must be modified to produce several nets (one for each $j \in J$) with the analogous properties.
\end{remark}

\bibliographystyle{plain}

\end{document}